\newtheorem{theorem}{Theorem}
\newtheorem{lemma}[theorem]{Lemma}
\newcommand{\im}{\mathop{\mathrm{im}}\nolimits}
\newcommand{\ex}{\mathop{\mathrm{ex}}\nolimits}
\begin{document}

\begin{frontmatter}

\title{On the maximum number of five-cycles in a triangle-free graph}
\author{Andrzej Grzesik}
\ead{Andrzej.Grzesik@uj.edu.pl}
\address{Theoretical Computer Science Department, Faculty of Mathematics and Computer Science, Jagiellonian University, ul. Prof. St. \L ojasiewicza 6, 30-348 Krak\'ow, Poland}

\begin{abstract}
Using Razborov's flag algebras we show that a triangle-free graph on $n$ vertices contains at most $\left(\frac{n}{5}\right)^5$ cycles of length five. It settles in the affirmative a conjecture of Erd\H{o}s.
\end{abstract}

\begin{keyword}
Erd\H os conjecture\sep flag algebras \sep pentagon\sep triangle-free\sep Tur\'an density
\end{keyword}

\end{frontmatter}

In \cite{E}, Erd\H os conjectured that the number of cycles of  length 5 in a triangle-free graph of order $n$ is at most $(n/5)^5$
and further, this bound is attained in the case when $n$ is divisible by 5 by the blow-up of $C_5$ (i.e., five sets of $n/5$ independent vertices; vertices from different sets are connected according to the edges in $C_5$). Gy\H ori~\cite{G} showed that a triangle-free graph of order $n$
contains no more than $c\left(\frac{n+1}{5}\right)^5$ copies of $C_5$, where $c = \frac{16875}{16384} < 1.03$. Recently, this bound has been further improved by F\"uredi (personal communication). In this note, we settle the Erd\H{o}s conjecture in the affirmative using flag algebras.

Let us first introduce some basic notation, as well as recalling certain facts and results that we shall use later on. An $r$-graph $G$ is a pair $(V(G), E(G))$, where $V(G)$ is a set of vertices and $E(G)$ is a family of $r$-element subsets of $V(G)$ called edges.
For a (large) $r$-graph $G$ on $n$ vertices and a (small) $r$-graph $A$  on $k$ vertices let $C_A(G)$ be the set of all $k$-element subsets of $V(G)$ which induce a copy of $A$ in $G$.
The \textit{$A$-density} of an $r$-graph $G$ is defined as
$$d_A(G) = \frac{|C_A(G)|}{{n \choose k}}.$$
Thus, if $A$ is just a single edge, $d_A(G)$ becomes the standard edge density.

For a family $\mathcal{F}$ of forbidden $r$-graphs, we define the \textit{Tur\'an number} of $\mathcal{F}$ as
$$\ex_A(n,\mathcal{F}) = \max\{|C_A(G)| : G \mbox{ is }\mathcal{F}\mbox{-free, } |V(G)|=n\}$$
and by the \textit{Tur\'an density} $\pi_A(\mathcal{F})$ of $\mathcal{F}$ we mean
$$\pi_A(\mathcal{F}) = \lim_{n\rightarrow\infty}\frac{\ex_A(n,\mathcal{F})}{{n \choose k}}.$$

It is easy to show, using a `blow-up' argument similar to
 the one we use in the proof of Theorem~\ref{thm2} below,
that the limit above exists.

Let us now sketch the main idea behind the flag algebra approach
introduced by Razborov~\cite{R1} (see also Baber and Talbot~\cite{BT}).

Let us fix some $r$-graph $A$ on $k$ vertices and  let $\mathcal{F}$ be a
family of $r$-graphs whose Tur\'an density we wish to compute or
bound from above.

To this end, we consider the family $\mathcal{H}$ of all $\mathcal{F}$-free $r$-graphs on $l$ vertices, up to isomorphism. Clearly, if $l$ is small, we can list all elements $\mathcal{H}$, either by hand or by computer search.

For $H\in\mathcal{H}$ and a large $\mathcal{F}$-free $r$-graph $G$, let $p(H;G)$ denote the probability that a randomly chosen $l$-element set
from $V(G)$ induces a subgraph isomorphic to $H$. Thus, $d_A(G)=p(A;G)$. By averaging over all $l$-element subsets of $V(G)$,
we can express the $A$-density of $G$ as
\begin{equation}\label{eq1}
d_A(G) = \sum_{H\in\mathcal{H}}d_A(H)p(H;G),
\end{equation}
and hence,
\begin{equation}\label{eq2}
d_A(G) \leq \max_{H\in\mathcal{H}}d_A(H).
\end{equation}

The above bound on $d_A(G)$ is, in general, rather poor.
Using the flag algebra approach, one can sometimes improve on it significantly.

Thus, let us define a \textit{type} $\sigma = (G_\sigma, \theta)$ as an $r$-graph $G_\sigma$, together with a bijective map $\theta : [|\sigma|] \longrightarrow V(G_\sigma)$. By the \textit{order} $|\sigma|$ of $\sigma$ we mean $|V(G_\sigma)|$. Given a type $\sigma$, we define a \textit{$\sigma$-flag} $F=(G_F, \theta)$ as an $r$-graph with an injective  map $\theta$ that induces the type $\sigma$. A flag $F=(G_F, \theta)$ is called \textit{admissible} if $G_F$ is $\mathcal{F}$-free.

In other words, for a given family $\mathcal{F}$ and a type $\sigma$
(i.e., an $r$-graph with all vertices labelled by
$[|\sigma|]=\{1,\dots, |\sigma|\}$)
an admissible $\sigma$-flag of order $m$ is  an $\mathcal{F}$-free $r$-graph
on $m$ vertices, which has $|\sigma|$ labelled vertices which induce $\sigma$.

Let us fix a type $\sigma$ and an integer $m \leq (l+|\sigma|)/2$. This bound on $m$ ensures that an $r$-graph on $l$ vertices can contain two subgraphs on $m$ vertices overlapping in exactly $|\sigma|$ vertices. Let $\mathcal{F}_m^\sigma$ be the set of all admissible $\sigma$-flags of order $m$, up to isomorphism. Furthermore, by $\Theta_H$ we denote the set of all injections from $[|\sigma|]$ to $V(H)$. Finally, for  $F_a, F_b \in \mathcal{F}_m^\sigma$ and $\theta \in \Theta_H$, let $p(F_a, F_b, \theta_H; H)$  be the probability that if we choose a random $m$-element set $V_a \subseteq V(H)$ with $\im(\theta) \subset V_a$ and then select a random $m$-element set $V_b \subseteq V(H)$
such that $\im(\theta) = V_a \cap V_b$, then the induced $\sigma$-flags obtained are isomorphic to $F_a$ and $F_b$ respectively.

Consider a positive semidefinite square matrix $Q = (q_{ab})$ of  dimension $|\mathcal{F}_m^\sigma|$ and set
$$c_H(\sigma, m, Q) = \sum_{F_a,F_b\in\mathcal{F}_m^\sigma}q_{ab}\mathbb{E}_{\theta\in\Theta_H}[p(F_a, F_b, \theta; H)].$$

The following fact (see \cite{BT} or \cite{R2}) is crucial for
the flag algebra approach.

\begin{lemma}
For $t$ types $\sigma_i$, $m_i \leq (l+|\sigma_i|)/2$, positive semidefinite matrices $Q_i$ of dimension $|\mathcal{F}_{m_i}^{\sigma_i}|$,
and  $H\in\mathcal{H}$ let
$$c_H = \sum_{i=1}^t c_H(\sigma_i, m_i, Q_i).$$
Then
$$\sum_{H\in\mathcal{H}}c_H p(H;G) + o(1) \geq 0.$$
\end{lemma}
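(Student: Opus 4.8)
The plan is to reduce, by linearity in the $Q_i$ and finiteness of the number $t$ of types, to the case of a single type $\sigma$, a single integer $m\le(l+|\sigma|)/2$, and a single positive semidefinite matrix $Q=(q_{ab})$ of dimension $N:=|\mathcal F_m^\sigma|$. Indeed, since $c_H=\sum_i c_H(\sigma_i,m_i,Q_i)$ we have
\[
\sum_{H\in\mathcal H}c_H\,p(H;G)=\sum_{i=1}^{t}\Big(\sum_{H\in\mathcal H}c_H(\sigma_i,m_i,Q_i)\,p(H;G)\Big),
\]
and $t$ is fixed, so it is enough to prove $\sum_{H\in\mathcal H}c_H(\sigma,m,Q)\,p(H;G)\ge -o(1)$ for each type on its own.

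First I would unfold the left-hand side: expanding $c_H(\sigma,m,Q)$ and interchanging the finite sums,
\[
\sum_{H\in\mathcal H}c_H(\sigma,m,Q)\,p(H;G)=\sum_{F_a,F_b\in\mathcal F_m^\sigma}q_{ab}\sum_{H\in\mathcal H}p(H;G)\,\mathbb E_{\theta\in\Theta_H}\!\big[p(F_a,F_b,\theta;H)\big].
\]
The inner double sum equals the probability, in the following experiment carried out in $G$, that $(V_a,\theta)\cong F_a$ and $(V_b,\theta)\cong F_b$: pick a uniformly random $l$-subset $W\subseteq V(G)$ (which realises the isomorphism type $H=G[W]$ with probability $p(H;G)$), then a uniformly random injection $\theta\colon[|\sigma|]\to W$, then a uniformly random $m$-subset $V_a\subseteq W$ with $\im(\theta)\subseteq V_a$, then a uniformly random $m$-subset $V_b\subseteq W$ with $V_a\cap V_b=\im(\theta)$. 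The condition $m\le(l+|\sigma|)/2$ gives $|V_a\cup V_b|=2m-|\sigma|\le l$, so every triple $(\theta,V_a,V_b)$ with $V_a\cap V_b=\im(\theta)$ fits inside some $W$; a direct count of the $W$'s containing a fixed such triple (there are $\binom{n-2m+|\sigma|}{l-2m+|\sigma|}$ of them, independently of the triple) together with the relevant counts of injections and subsets shows that the marginal of the experiment on $(\theta,V_a,V_b)$ is the \emph{uniform} distribution on all such triples in $V(G)$. That is exactly the distribution defining the analogue $c_G(\sigma,m,Q):=\sum_{a,b}q_{ab}\,\mathbb E_{\theta\in\Theta_G}[p(F_a,F_b,\theta;G)]$ computed directly in $G$, and hence $\sum_{H}c_H(\sigma,m,Q)\,p(H;G)=c_G(\sigma,m,Q)$.

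It then remains to show $c_G(\sigma,m,Q)\ge -o(1)$. Fix $\theta\in\Theta_G$ and let $x_a$ be the probability that a uniformly random $m$-subset $V\subseteq V(G)$ with $\im(\theta)\subseteq V$ induces the $\sigma$-flag $F_a$; since $G$ is $\mathcal F$-free, every such induced flag is admissible, so $x=(x_a)\in\mathbb R^{N}$ (in fact $\sum_a x_a\in\{0,1\}$). If $V_a$ and $V_b$ are sampled independently from this distribution, the pair induces $(F_a,F_b)$ with probability exactly $x_ax_b$, whereas the event $V_a\cap V_b=\im(\theta)$ fails with probability at most $(m-|\sigma|)^2/(n-|\sigma|)=O(1/n)$; conditioning on that event perturbs each probability by $O(1/n)$, uniformly in $a,b$ and in $\theta$, so $p(F_a,F_b,\theta;G)=x_ax_b+O(1/n)$. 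Therefore
\[
\sum_{F_a,F_b\in\mathcal F_m^\sigma}q_{ab}\,p(F_a,F_b,\theta;G)=x^{\mathsf T}Qx+O(1/n)\ge -O(1/n),
\]
using $Q\succeq0$ and that $\sum_{a,b}|q_{ab}|$ is a fixed constant; averaging over $\theta\in\Theta_G$ gives $c_G(\sigma,m,Q)\ge -O(1/n)$, and summing the $t$ type-contributions yields $\sum_H c_H\,p(H;G)\ge -O(t/n)=-o(1)$, which is the claim.

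The two ingredients are both routine, but I expect the main subtlety to be the identification in the second paragraph, namely checking that the experiment through a random $l$-subset literally reproduces the uniform distribution on the relevant triples in $V(G)$ (a bookkeeping computation with binomial coefficients), and making sure the $O(1/n)$ error in the third paragraph is uniform over all injections $\theta$. The genuine mathematical content is small: the discrepancy between sampling two $m$-subsets independently and sampling them with intersection forced to the fixed size $|\sigma|$ is of order $1/n$, and it is precisely at that single step that positive semidefiniteness of the $Q_i$ is used.
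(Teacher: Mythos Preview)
The paper does not actually prove this lemma; it merely states it as a known fact with references to Baber--Talbot~\cite{BT} and Razborov~\cite{R2}. Your argument is correct and is essentially the standard proof given in those sources: reduce by linearity to a single type, identify $\sum_{H}c_H(\sigma,m,Q)\,p(H;G)$ with $c_G(\sigma,m,Q)$ by the symmetry/double-counting computation you outline, and then observe that for each fixed $\theta$ the joint flag probability $p(F_a,F_b,\theta;G)$ differs from the product $x_a x_b$ of the marginal flag densities only by an $O(1/n)$ term coming from the ``disjointness'' conditioning, so that positive semidefiniteness of $Q$ gives $x^{\mathsf T}Qx\ge 0$ and hence the required $-o(1)$ lower bound. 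Your identification of the two places where care is needed (the bookkeeping showing the marginal on triples is uniform, and the uniformity in $\theta$ of the $O(1/n)$ error) is accurate, and both are indeed routine.
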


If we combine the above lemma with (\ref{eq1}) we get
$$d_A(G) \leq \sum_{H\in\mathcal{H}}(d_A(H) + c_H)p(H;G) + o(1).$$
Hence,
$$d_A(G) \leq \max_{H\in\mathcal{H}} (d_A(H) + c_H) + o(1)$$
and consequently
\begin{equation}\label{Turan bound}
\pi_A(\mathcal{F}) \leq \max_{H\in\mathcal{H}} (d_A(H) + c_H).
\end{equation}

Since $c_H$ may be negative, for appropriate choices of the $\sigma_i$,
$m_i$, and $Q_i$, this bound may be significantly better than the
bound given by (\ref{eq2}).

Note that now, one can bound the Tur\'an density by solving the following semidefinite programming problem:
given $\sigma_i$ and  $m_i$, we wish to find positive semidefinite matrices $Q_i$ which minimize the bound
on $\pi_A(\mathcal{F})$ given by (\ref{Turan bound}).

The main result of this note is given by the following theorem.

\begin{theorem}\label{thm1}
$\pi_{C_5}(K_3)\le \frac{24}{625}$.
\end{theorem}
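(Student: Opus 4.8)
The plan is to invoke the flag algebra framework recalled above with $A = C_5$ and $\mathcal{F} = \{K_3\}$, and to produce an explicit feasible point of the associated semidefinite relaxation whose value is exactly $\frac{24}{625}$; note that $\frac{24}{625}$ is the pentagon density of the balanced blow-up of $C_5$, and since that graph is triangle-free the method of (\ref{Turan bound}) cannot possibly yield anything smaller. First I would fix a small integer $l$ — the computation uses $l = 7$ — and generate, by computer, the family $\mathcal{H}$ of all triangle-free graphs on $l$ vertices up to isomorphism (several hundred of them). For each $H \in \mathcal{H}$ I would compute the rational number $d_{C_5}(H) = p(C_5;H)$, i.e.\ the fraction of $5$-element subsets of $V(H)$ that induce a pentagon.

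Next I would select a collection of types $\sigma_1,\dots,\sigma_t$ — triangle-free graphs on $1$, $3$, and $5$ labelled vertices — and for each one set the flag order $m_i = (l+|\sigma_i|)/2$, which is the largest admissible value and so makes the relaxation as strong as possible. For every $i$ I would list the admissible $\sigma_i$-flags $\mathcal{F}_{m_i}^{\sigma_i}$ and, for every $H \in \mathcal{H}$, compute the averaged pair-densities $\mathbb{E}_{\theta\in\Theta_H}[p(F_a,F_b,\theta;H)]$. All of these quantities are exact rationals, and together they express each $c_H = \sum_i c_H(\sigma_i,m_i,Q_i)$ as an explicit linear function of the entries of the symmetric matrices $Q_i$.

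At this point the bound reduces to the semidefinite program: minimize $\lambda$ subject to $Q_i$ positive semidefinite and $d_{C_5}(H) + c_H \le \lambda$ for all $H \in \mathcal{H}$. Solving it numerically should return $\lambda$ extremely close to $\frac{24}{625}$, with the constraints coming from the blow-up of $C_5$ tight — this is the sanity check that $l$ and the chosen types are rich enough (otherwise one enlarges $l$ or adds types and repeats). The step I expect to be the main obstacle is turning the floating-point optimum into a rigorous certificate: one must replace the numerical $Q_i$ by nearby \emph{rational} matrices $\widetilde Q_i$ that are provably positive semidefinite — say by exhibiting a rational $LDL^\top$-type factorization — and that still satisfy all of the finitely many linear inequalities $d_{C_5}(H)+c_H \le \frac{24}{625}$ simultaneously. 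Because the optimum is attained, each $\widetilde Q_i$ must have exactly the kernel forced by the extremal pentagon blow-up, so in practice one block-diagonalizes the $Q_i$ in a basis adapted to that configuration and rounds only the remaining positive-definite part.

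Once such a certificate $(l,\sigma_i,m_i,\widetilde Q_i)$ is in hand, the conclusion is immediate. The Lemma applied with these data gives $\sum_{H\in\mathcal{H}} c_H\, p(H;G) + o(1) \ge 0$ for every triangle-free $G$, so combining with (\ref{eq1}),
$$d_{C_5}(G) \;\le\; \max_{H\in\mathcal{H}}\bigl(d_{C_5}(H)+c_H\bigr) + o(1) \;\le\; \frac{24}{625} + o(1),$$
and letting the number of vertices tend to infinity yields $\pi_{C_5}(K_3) \le \frac{24}{625}$. The only part that must be carried out in exact arithmetic — verifying positive semidefiniteness of the $\widetilde Q_i$ and checking the finitely many rational inequalities over the whole list $\mathcal{H}$ — can be done with a computer algebra system, which is what makes the argument a proof rather than numerical evidence.
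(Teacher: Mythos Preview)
Your proposal is a faithful description of the flag-algebra method and of the rounding/certification issues one faces, but as written it is a \emph{plan}, not a proof: the entire content of a flag-algebra argument is the explicit certificate $(l,\sigma_i,m_i,\widetilde Q_i)$, and you have not produced one. Phrases such as ``solving it numerically should return $\lambda$ extremely close to $\tfrac{24}{625}$'' and ``the step I expect to be the main obstacle is\dots'' are precisely the gaps a referee would ask you to close. Until the rational positive-semidefinite matrices are written down and the finitely many inequalities $d_{C_5}(H)+c_H\le \tfrac{24}{625}$ are checked, nothing has been proved.

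By contrast, the paper's proof is exactly such a certificate, and a surprisingly small one. It takes $l=5$ (not $7$), so that $\mathcal H$ consists of only the $14$ triangle-free graphs on five vertices, and uses just the three types on three labelled vertices (the empty graph, one edge, a path of length two) with $m=4$. This yields three matrices of sizes $8$, $6$, and $5$, which the paper writes out explicitly with rational entries, verifies to be positive semidefinite via their characteristic polynomials, and then checks by hand that each of the fourteen values $d_{C_5}(H)+c_H$ is at most $\tfrac{24}{625}$. Your choice of $l=7$ with types of orders $1$, $3$, $5$ would also work in principle (since even $l=5$ suffices), but it would entail several hundred graphs in $\mathcal H$ and much larger flag matrices, making an explicit printed certificate impractical; the paper's point is that the problem is small enough to be done essentially by hand.
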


\begin{proof}
Let us consider $l=5$ and three types on 3 vertices -- $\sigma_0$ stands for a graph with no edges, the type $\sigma_1$ has one edge and $\sigma_2$ has two. In each case, we consider $m=4$. There are 8 admissible $\sigma_0$-flags (the corresponding variables to these flags form the matrix $P$, say), 6 admissible $\sigma_1$-flags (we denote the corresponding matrix by $Q$) and 5 admissible $\sigma_2$-flags (matrix $R$). All of these graphs and flags are presented in Figure \ref{flagi}.
\begin{figure}[h]\label{flagi}
Triangle-free graphs on 5 vertices:\quad \begin{minipage}[c]{6ex}\centering\includegraphics[width=6ex]{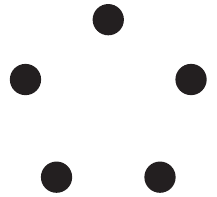}\end{minipage}\quad \begin{minipage}[c]{6ex}\centering\includegraphics[width=6ex]{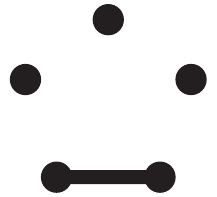}\end{minipage}\quad \begin{minipage}[c]{6ex}\centering\includegraphics[width=6ex]{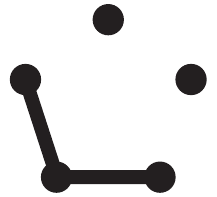}\end{minipage}\quad \begin{minipage}[c]{6ex}\centering\includegraphics[width=6ex]{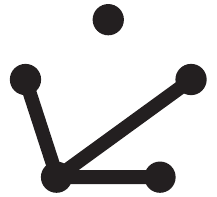}\end{minipage}\quad \begin{minipage}[c]{6ex}\centering\includegraphics[width=6ex]{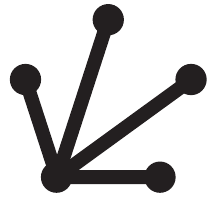}\end{minipage} \vspace{3mm} \\
\phantom{.}\qquad \quad \begin{minipage}[c]{6ex}\centering\includegraphics[width=6ex]{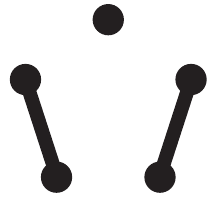}\end{minipage}\quad \begin{minipage}[c]{6ex}\centering\includegraphics[width=6ex]{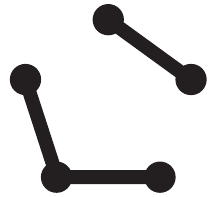}\end{minipage}\quad \begin{minipage}[c]{6ex}\centering\includegraphics[width=6ex]{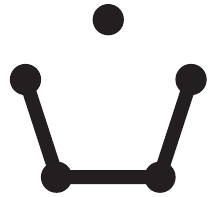}\end{minipage}\quad \begin{minipage}[c]{6ex}\centering\includegraphics[width=6ex]{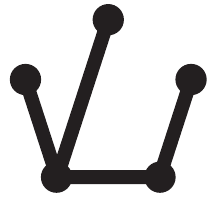}\end{minipage}\quad \begin{minipage}[c]{6ex}\centering\includegraphics[width=6ex]{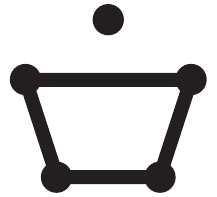}\end{minipage}\quad \begin{minipage}[c]{6ex}\centering\includegraphics[width=6ex]{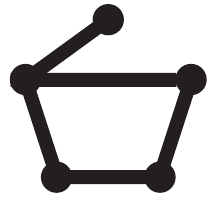}\end{minipage}\quad \begin{minipage}[c]{6ex}\centering\includegraphics[width=6ex]{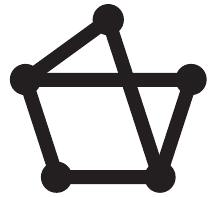}\end{minipage}\quad \begin{minipage}[c]{6ex}\centering\includegraphics[width=6ex]{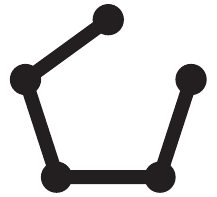}\end{minipage}\quad \begin{minipage}[c]{6ex}\centering\includegraphics[width=6ex]{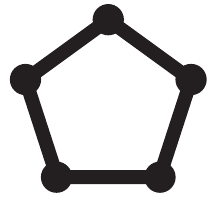}\end{minipage} \vspace{3mm} \\
$\sigma_0$-flags:\quad \begin{minipage}[c]{6ex}\centering\includegraphics[width=6ex]{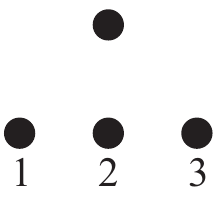}\end{minipage} \quad \begin{minipage}[c]{6ex}\centering\includegraphics[width=6ex]{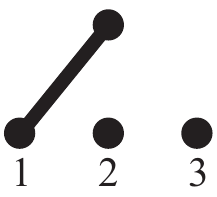}\end{minipage} \quad \begin{minipage}[c]{6ex}\centering\includegraphics[width=6ex]{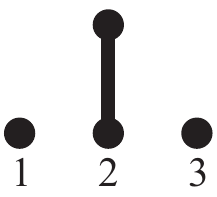}\end{minipage} \quad \begin{minipage}[c]{6ex}\centering\includegraphics[width=6ex]{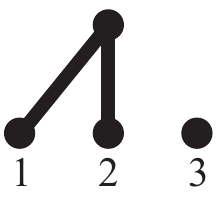}\end{minipage} \quad \begin{minipage}[c]{6ex}\centering\includegraphics[width=6ex]{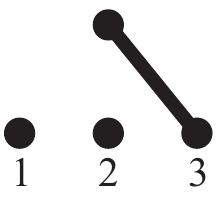}\end{minipage} \quad \begin{minipage}[c]{6ex}\centering\includegraphics[width=6ex]{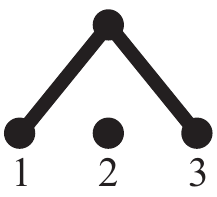}\end{minipage} \quad \begin{minipage}[c]{6ex}\centering\includegraphics[width=6ex]{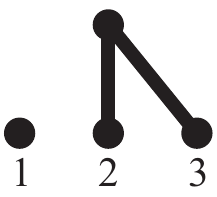}\end{minipage} \quad \begin{minipage}[c]{6ex}\centering\includegraphics[width=6ex]{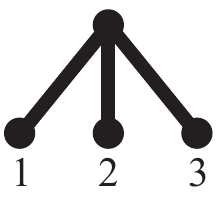}\end{minipage} \vspace{1mm} \\
$\sigma_1$-flags:\quad \begin{minipage}[c]{6ex}\centering\includegraphics[width=6ex]{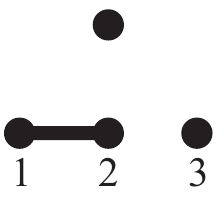}\end{minipage} \quad \begin{minipage}[c]{6ex}\centering\includegraphics[width=6ex]{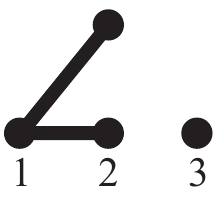}\end{minipage} \quad \begin{minipage}[c]{6ex}\centering\includegraphics[width=6ex]{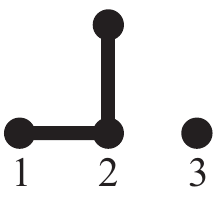}\end{minipage} \quad \begin{minipage}[c]{6ex}\centering\includegraphics[width=6ex]{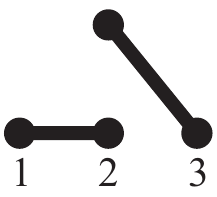}\end{minipage} \quad \begin{minipage}[c]{6ex}\centering\includegraphics[width=6ex]{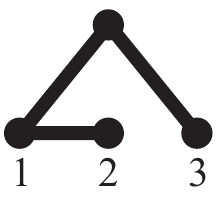}\end{minipage} \quad \begin{minipage}[c]{6ex}\centering\includegraphics[width=6ex]{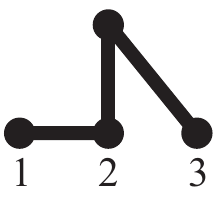}\end{minipage} \vspace{1mm} \\
$\sigma_2$-flags:\quad \begin{minipage}[c]{6ex}\centering\includegraphics[width=6ex]{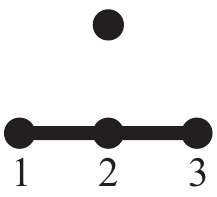}\end{minipage} \quad \begin{minipage}[c]{6ex}\centering\includegraphics[width=6ex]{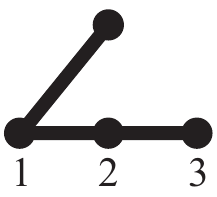}\end{minipage} \quad \begin{minipage}[c]{6ex}\centering\includegraphics[width=6ex]{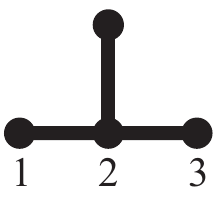}\end{minipage} \quad \begin{minipage}[c]{6ex}\centering\includegraphics[width=6ex]{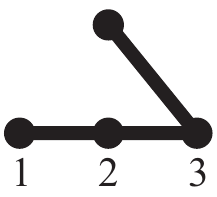}\end{minipage} \quad \begin{minipage}[c]{6ex}\centering\includegraphics[width=6ex]{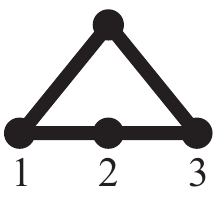}\end{minipage}
\caption{List of the considered graphs and flags.}\label{flagi}
\end{figure}

Computing all the appearances of each pair of flags in each graph, we infer that

\begin{align*}
\pi_{C_5}&(K_3) \leq \frac{1}{120}\max\{120p_{11},\\
    &12p_{11} + 24p_{12} + 24p_{13} + 24p_{15} + 12q_{11},\\
    &8p_{12} + 8p_{13} + 8p_{14} + 8p_{15} + 8p_{16} + 8p_{17} + 4p_{22} + 4p_{33} + 4p_{55} + 8q_{12} + 8q_{13} + 4r_{11},
\end{align*}
\begin{align*}
\phantom{\pi_{C_5}}
    &12p_{14} + 12p_{16} + 12p_{17} + 12p_{18} + 6q_{22} + 6q_{33} + 12r_{13},\\
    &48p_{18} + 24r_{33},\\
    &16p_{23} + 16p_{25} + 16p_{35} + 8q_{11} + 16q_{14},\\
    &8p_{27} + 8p_{36} + 8p_{45} + 8q_{14} + 8q_{24} + 8q_{34} + 4q_{44} + 4r_{11},\\
    &4p_{23} + 4p_{24} + 4p_{25} + 4p_{26} + 4p_{34} + 4p_{35} + 4p_{37} + 4p_{56} + 4p_{57} + 4q_{12} + 4q_{13} + 4q_{15} +\\
    &\qquad + 4q_{16} + 4q_{23} + 4r_{12} + 4r_{14},\\
    &4p_{27} + 4p_{28} + 4p_{36} + 4p_{38} + 4p_{45} + 4p_{58} + 4q_{15} + 4q_{16} + 4q_{25} + 4q_{36} + 4r_{13} + 2r_{22} +\\
    &\qquad + 4r_{23} + 4r_{34} + 2r_{44},\\
    &8p_{44} + 8p_{66} + 8p_{77} + 16q_{23} + 16r_{15},\\
    &4p_{48} + 4p_{68} + 4p_{78} + 4q_{26} + 4q_{35} + 2q_{55} + 2q_{66} + 4r_{15} + 4r_{23} + 4r_{25} + 4r_{34} + 4r_{35} +\\
    &\qquad + 4r_{45},\\
    &12p_{88} + 24r_{35} + 12r_{55},\\
    &4p_{46} + 4p_{47} + 4p_{67} + 4q_{24} + 4q_{26} + 4q_{34} + 4q_{35} + 4q_{45} + 4q_{46} + 4r_{12} + 4r_{14} + 4r_{24},\\
    &20q_{56} + 20r_{24} + 120\},
\end{align*}
where the maximum is taken over all possible coefficients $p_{ij}$, $q_{ij}$, $r_{ij}$ such that all the respective matrices $P$, $Q$, and $R$ are positive semidefinite.

For an explanation, we will consider one example and count appearances of each pair of flags in the second graph (consisting of a single edge). We need to consider all $5! = 120$ possibilities of placing three labels inducing a type and one additional label for each vertex left in the flags. If we place labels 1 and 2 in such a way that they are connected by an edge (there are 12 such possibilities), we always get two flags \begin{minipage}[c]{4ex}\centering\includegraphics[width=3ex]{flaga11.pdf}\end{minipage}, yielding coefficient $12q_{11}$. If labels 1 and 3 or 2 and 3 are connected by an edge, we get pairs of flags which are not under our consideration. If labels 1 and 4 or 1 and 5 are connected by an edge, we get flag \begin{minipage}[c]{4ex}\centering\includegraphics[width=3ex]{flaga02.pdf}\end{minipage} and flag \begin{minipage}[c]{4ex}\centering\includegraphics[width=3ex]{flaga01.pdf}\end{minipage}, yielding coefficient $24p_{12}$. Similarly, we get coefficients $24p_{13}$ and $24p_{15}$. The only possibilities left are those when 4 and 5 are connected by an edge. In those situations we get two flags \begin{minipage}[c]{4ex}\centering\includegraphics[width=3ex]{flaga01.pdf}\end{minipage}, yielding coefficient $12p_{11}$. Hence, we get $\frac{1}{120}(12p_{11} + 24p_{12} + 24p_{13} + 24p_{15} + 12q_{11})$. 

We take $P$, $Q$ and $R$ to be the matrices
$$P = \frac{1}{625}\left(
  \begin{array}{rrrrrrrr}
  24 & -36 & -36 & 24 & -36 & 24 & 24 & -36 \\
    -36 & 277 & 97 & -79 & 97 & -79 & -259 & 54 \\
    -36 & 97 & 277 & -79 & 97 & -259 & -79 & 54 \\
    24 & -79 & -79 & 247 & -259 & 67 & 67 & -36 \\
    -36 & 97 & 97 & -259 & 277 & -79 & -79 & 54 \\
    24 & -79 & -259 & 67 & -79 & 247 & 67 & -36 \\
    24 & -259 & -79 & 67 & -79 & 67 & 247 & -36 \\
    -36 & 54 & 54 & -36 & 54 & -36 & -36 & 54
  \end{array}
\right),$$
$$Q = \frac{1}{2500}\left(
  \begin{array}{rrrrrr}
    1728 & -1551 & -1551 & -1308 & 687 & 687 \\
    -1551 & 2336 & 742 & 908 & 2557 & -4084 \\
    -1551 & 742 & 2336 & 908 & -4084 & 2557 \\
    -1308 & 908 & 908 & 1728 & -254 & -254 \\
    687 & 2557 & -4084 & -254 & 15264 & -14424 \\
    687 & -4084 & 2557 & -254 & -14424 & 15264
  \end{array}
\right),$$
$$R = \frac{1}{625}\left(
  \begin{array}{rrrrr}
   1512 & 568 & -380 & 568 & -376 \\
   568 & 475 & -191 & 0 & -93 \\
   -380 & -191 & 192 & -191 & -2 \\
   568 & 0 & -191 & 475 & -93 \\
   -376 & -93 & -2 & -93 & 190
  \end{array}
\right).$$

It can be checked by any program for mathematical calculations (e.g. Mathematica, Maple) that matrix $P$ multiplied by 625 has characteristic polynomial
$$x^4(x-360)^2(x^2 - 930x + 53766),$$
and so it has eigenvalues $0$, $0$, $0$, $0$, $\approx62$, $360$, $360$ and $\approx 868$, matrix $Q$ multiplied by 2500 has characteristic polynomial $$x(x^2 - 31282x + 3219791)(x^3 - 7374x^2 + 7536419x - 324955440)$$
and eigenvalues $0$, $\approx 45$, $\approx 103$, $\approx 1170$, $\approx 6159$, $\approx 31179$, and matrix $R$ multiplied by 625 has characteristic polynomial
$$-x^2(x-475)(x^2 - 2369x + 492426)$$
and eigenvalues $0$, $0$, $\approx 230$, $475$ and $\approx 2139$. Thus, $P$, $Q$ and $R$ are all positive semidefinite.

Hence, for an upper bound on $\pi_{C_5}(K_3)$ we get
\begin{eqnarray*}
\pi_{C_5}(K_3) &\leq& \max\left\{\frac{24}{625}, \frac{24}{625}, \frac{24}{625}, \frac{24}{625}, \frac{24}{625}, \frac{24}{625}, \frac{322}{9375}, \frac{2355}{62500}, \frac{24}{625}, \frac{24}{625}, \right. \\
&& \left. \frac{24}{625}, \frac{24}{625}, -\frac{126}{6250}, \frac{24}{625}\right\} = \frac{24}{625}.
\end{eqnarray*}

\end{proof}

The Erd\H{o}s conjecture is a straightforward consequence of the above result.

\begin{theorem}\label{thm2}
The number of cycles of  length 5 in a triangle-free graph of order $n$ is at most $\left(\frac{n}{5}\right)^5$.
\end{theorem}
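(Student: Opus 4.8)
The plan is to upgrade the asymptotic bound $\pi_{C_5}(K_3)\le\frac{24}{625}$ from Theorem~\ref{thm1} to an exact bound valid for every $n$, by a blow-up argument. Note first that Theorem~\ref{thm1} by itself is not sufficient: it only asserts that $\ex_{C_5}(m,K_3)/{m\choose 5}\to\pi_{C_5}(K_3)$ as $m\to\infty$, and for small $m$ the $C_5$-density can greatly exceed $\frac{24}{625}$ (for $m=5$ and $G=C_5$ it equals $1$). So the transfer to finite $n$ needs an extra step.

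Fix a triangle-free graph $G$ on $n$ vertices and let $N$ be the number of $5$-cycles in $G$. Since $G$ is triangle-free, every $5$-cycle of $G$ is induced (a chord would create a triangle), and hence $N=|C_{C_5}(G)|$. I would then pass to the balanced blow-up $G[t]$, obtained by replacing each vertex of $G$ by an independent set of $t$ vertices and each edge of $G$ by the complete bipartite graph between the corresponding classes; $G[t]$ is triangle-free and has $nt$ vertices. The key step is the identity $|C_{C_5}(G[t])|=t^5N$. One inclusion is immediate: picking one vertex from each of the five classes along a $5$-cycle of $G$ produces an induced $5$-cycle of $G[t]$, and distinct $5$-cycles of $G$, or distinct choices of representatives, yield $5$-cycles of $G[t]$ with distinct vertex sets. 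For the reverse inclusion, take any $5$-cycle of $G[t]$ and project its vertices to $V(G)$: consecutive vertices lie in distinct adjacent classes, so the projection is a closed walk of length $5$ in $G$. Here one uses the elementary fact that in a triangle-free graph every closed walk of length $5$ is a genuine $5$-cycle: a closed odd walk containing a repeated vertex splits into two shorter closed walks of lengths summing to $5$, hence of lengths $2$ and $3$ (length $1$ is impossible, as there are no loops), but a closed walk of length $3$ is a triangle. Consequently the projection is an induced $5$-cycle of $G$, each fibre of the projection has size exactly $t^5$, and $|C_{C_5}(G[t])|=t^5N$ follows. I expect this closed-walk observation, simple as it is, to be the crux of the argument, since it is precisely what makes the blow-up preserve the pentagon count exactly.

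Finally, since $G[t]$ is a triangle-free graph on $nt$ vertices, $t^5N=|C_{C_5}(G[t])|\le\ex_{C_5}(nt,K_3)$. Dividing by ${nt\choose 5}$ and letting $t\to\infty$ with $n$ fixed, the left-hand side $t^5N/{nt\choose 5}$ tends to $120N/n^5$, while the right-hand side $\ex_{C_5}(nt,K_3)/{nt\choose 5}$ tends to $\pi_{C_5}(K_3)\le\frac{24}{625}$ by Theorem~\ref{thm1}. Hence $120N/n^5\le\frac{24}{625}$, that is, $N\le\frac{24}{625}\cdot\frac{n^5}{120}=(n/5)^5$. Since $G$ was an arbitrary triangle-free graph on $n$ vertices, this is the claimed bound; it is attained by the balanced blow-up of $C_5$ when $5\mid n$, which also shows it is best possible. (The same monotone blow-up argument shows in general that the limit defining $\pi_A(\mathcal F)$ exists, as asserted earlier.)
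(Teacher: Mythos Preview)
Your argument is correct and follows the same blow-up route as the paper: replace each vertex of $G$ by $t$ independent copies, observe that the blow-up is triangle-free with at least $t^5N$ pentagons, and let $t\to\infty$ to compare with $\pi_{C_5}(K_3)\le\tfrac{24}{625}$. The one point worth noting is that the step you flag as ``the crux'' --- the closed-walk argument showing that \emph{every} pentagon of $G[t]$ projects to a pentagon of $G$, hence $|C_{C_5}(G[t])|=t^5N$ exactly --- is not needed: the inequality $|C_{C_5}(G[t])|\ge t^5N$ (the easy direction, lifting each pentagon of $G$ in $t^5$ ways) already suffices, and this is all the paper uses. So your proof is a slightly more elaborate version of the paper's, not a different one.
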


\begin{proof}
Suppose that there is a triangle-free graph $G$ on $n$ vertices which has at least $\left(\frac{n}{5}\right)^5+\varepsilon$ cycles $C_5$, where $\varepsilon>0$. Then, we can construct triangle-free graphs $G_{nN}$ consisting of $n$ sets of $N$ independent vertices and all edges between vertices in different sets according to the edges in $G$.

The graph $G_{nN}$ has $nN$ vertices and at least $\left(\left(\frac{n}{5}\right)^5+\varepsilon\right)N^5$ cycles $C_5$. Thus, the Tur\'an density is at least
$$\pi_{C_5}(K_3) \geq \lim_{N\rightarrow\infty}
\frac{\left(\frac{nN}{5}\right)^5+\varepsilon N^5}{{nN \choose 5}}
= \frac{24}{625} + \frac{120\varepsilon}{n^5}
> \frac{24}{625},$$
which contradicts Theorem~\ref{thm1}.
\end{proof}

\section*{Acknowledgement}

After writing this manuscript, I learnt that the main result was obtained, independently and simultaneously, by Hatami, Hladk\'y, Kr\'al, Norine and Razborov \cite{HHKNR} using different approach.

I would like to thank Tomasz \L uczak for  introducing me to the topic of flag algebras, as well as for his suggestion to use it to solve the Erd\H os pentagon conjecture, and for many valuable comments and remarks.

\end{document}